\newcommand{\sym}{\mathcal{S}}
\newcommand{\A}{\mathcal{A}}      
\newcommand{\NN}{{\mathbb N}}
\newcommand{\asc}{\mbox{asc}}
\newcommand{\Asc}[1]{\A_{#1}}
\newcommand{\subi}{\textsf{Rem1}}
\newcommand{\subii}{\textsf{Rem2}}
\newcommand{\subiii}{\textsf{Rem3}}
\newcommand{\addi}{\textsf{Add1}}
\newcommand{\addii}{\textsf{Add2}}
\newcommand{\addiii}{\textsf{Add3}}
\newcommand{\cremove}{\psi '}
\newcommand{\cmyadd}{\varphi '}
\newcommand{\tpt}{$(\mathbf{2+2})$}
\newtheorem{proposition}{Proposition}
\newtheorem{theorem}[proposition]{Theorem}
\newtheorem{lemma}[proposition]{Lemma}
\theoremstyle{definition}
\newtheorem{example}[proposition]{Example}
\newcommand{\cclass}[1]{\mathcal{I}_{#1}}
\newcommand{\newl}{\mathsf{M}}
\newcommand{\newsl}{\mathsf{m}}
\newcommand{\red}{\mbox{\sf{redarc}}}
\newcommand{\maxch}{\mbox{\sf{maxarc}}}
\newcommand{\chords}{\mbox{\sf{arcs}}}
\newcommand{\clabel}{\mbox{\sf{label}}}
\newcommand{\cfill}{black!40}
\newcommand{\PATTERN}{
    \draw[step=1, xshift=14pt, yshift=14pt, \cfill] (0,0) grid (3,3);  
    \draw[step=1, xshift=14pt, yshift=14pt, thick] (0,1) -- (3,1);  
    \draw[step=1, xshift=14pt, yshift=14pt, thick] (1,0) -- (1,3);  
    \foreach \x/\y in {1/2,2/3,3/1} \node[disc, fill=black] at (\x,\y) {};  
}
\newcommand{\pattern}{\!\raisebox{-0.5em}{
  \begin{tikzpicture}[line width=0.7pt, scale=0.15]
    \tikzstyle{disc} = [circle,thin,draw=black, minimum size=1.7pt, inner sep=0pt ]
    \PATTERN
  \end{tikzpicture}}
}
\title{A direct encoding of Stoimenow’s matchings as ascent sequences}
\thanks{The authors were supported by grant no. 090038011 
  from the Icelandic Research Fund.}
\keywords{Combinatorial problem, encoding, matching, ascent sequences}
\author[A. Claesson]{Anders Claesson}
\address{A. Claesson and S. Kitaev: The Mathematics Institute,
Reykjavik University, 103 Reykjavik, Iceland.}
\author[M. Dukes]{Mark Dukes}
\address{M. Dukes: Science Institute, University of Iceland, 107
Reykjavik, Iceland.}
\author[S. Kitaev]{Sergey Kitaev}
\begin{document}
\begin{abstract}
In connection with Vassiliev's knot invariants, Stoimenow (1998) introduced
certain matchings, also called regular linearized chord diagrams.
Bousquet-M\'elou et al. (2008) gave a bijection from those
matchings to unlabeled $(\mathbf{2+2})$-free posets; they also showed how to
encode the posets as so called ascent sequences. In this paper we
present a direct encoding of Stoimenow's matchings as ascent
sequences. In doing so we give the rules for recursively constructing
and deconstructing such matchings.
\end{abstract}
\maketitle

\thispagestyle{empty}

\section{Introduction}

To give upper bounds on the dimension of the space of Vassiliev's knot
invariants of a given degree, Stoimenow~\cite{stoim} introduced what
he calls regular linearized chord diagrams. We call them {\it
  Stoimenow matchings}. As an example, these are the 5 Stoimenow
matchings on the set $\{1,2,3,4,5,6\}$:
$$
\scalebox{0.75}{\includegraphics{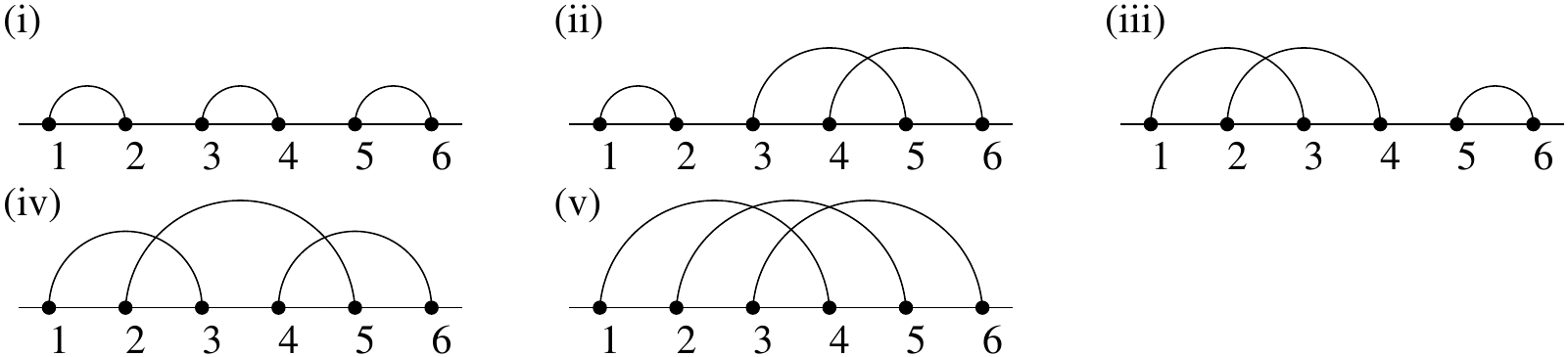}}
$$ 
In general, a \emph{matching} of the integers $\{1,2,\ldots , 2n\}$ is a
partition of that set into blocks of size 2, often called \emph{arcs}.
We say that a matching is \emph{Stoimenow} if there
are no occurrences of Type 1 or Type 2 arcs:
$$
\scalebox{0.75}{\includegraphics{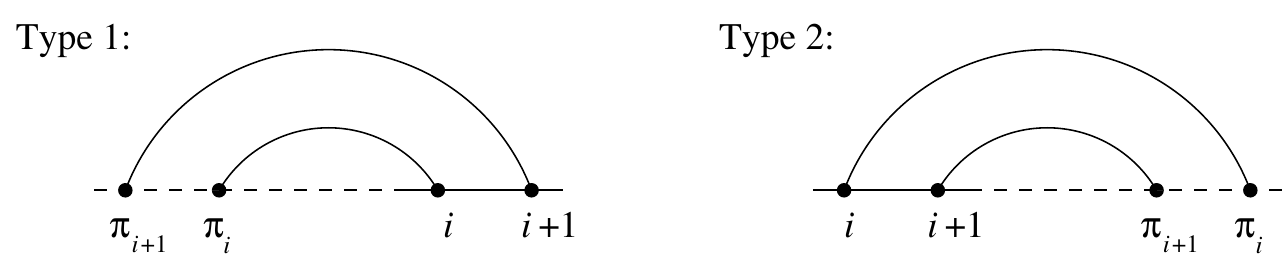}}
$$

In this paper we present a bijection between Stoimenow matchings on
$\{1,2,\ldots,2n\}$ and a collection of sequences of non-negative
integers that we call {\it{ascent sequences}}.

Given a sequence of integers $x=(x_1,\ldots , x_n)$, we say that the
sequence $x$ has an {\it{ascent}} at position $i$ if $x_i<x_{i+1}$.
The number of ascents of $x$ is denoted by $\asc(x)$.  Let
$\mathcal{A}_{n}$ be the collection of {\it{ascent sequences of length
    $n$}}:
$$\Asc{n} = \big\{\,(x_1,\ldots , x_n): x_1=0\text{ and } 
        0 \leq x_i \leq 1+\asc(x_1,\ldots , x_{i-1})
        \mbox{ for }1<i\leq n \,\big\}.
$$
These sequences were introduced in a recent paper by
Bousquet-M\'elou et al.~\cite{bcdk}. For example,
$\Asc{3}=\{ (0,0,0), \, (0,0,1), \, (0,1,0), \, (0,1,1), (0,1,2)
\}.$

Bousquet-M\'elou et al. gave bijections between four classes of
combinatorial objects, thus proving that they are equinumerous:
Stoimenow matchings; unlabeled \tpt-free posets; permutations avoiding
a specific pattern; and ascent sequences.  The following diagram, in
which solid arrows represents bijections given by Bousquet-M\'elou et
al., sums up the situation.
\begin{center}
  \begin{tikzpicture}
    \node (matching) at (0, 0) {Stoimenow matchings};
    \node (poset)    at (0, 2) {unlabeled \tpt-free posets};
    \node (ascseq)   at (6, 2) {ascent sequences};
    \node (perm)     at (6, 0) {\pattern-avoiding permutations};
    \path[->] (matching) edge node[left]  {$\Omega$}  (poset);
    \path[->] (poset)    edge node[above] {$\Psi$}    (ascseq);
    \path[->] (perm)     edge node[right] {$\Lambda$} (ascseq);
    \path[->, dashed] (matching) edge node [below] {$\Psi'$} (ascseq);
  \end{tikzpicture}
\end{center}
In particular, $\Psi\circ\Omega$ is a bijection between Stoimenow
matchings and ascent sequences. The dashed arrow is the contribution
of this paper. That is, we give a direct description of
$\Psi'=\Psi\circ\Omega$. Ascent sequences have an obvious recursive
structure. We unearth the corresponding recursive structure of
Stoimenow matchings. It amounts to two functions, $\varphi'$ and
$\psi'$, which act on matchings in an identical manner to the
functions $\varphi$ and $\psi$ of \cite[\S 3]{bcdk} acting on posets.

\section{Stoimenow matchings and edge removal}

Let $\cclass{n}$ be the collection of Stoimenow matchings with $n$
arcs. Let $\sym_{m}$ be the collection of all permutations of the set
$\{1,\ldots , m\}$. Any Stoimenow matchings may be written uniquely as
a fixed point free involution $\pi \in \sym_{2n}$ so that the number
paired with $i$ is $\pi_i$. We shall abuse notation ever so slightly
by considering $\pi$ to be dually a matching in $\cclass{n}$ and an
involution in $\sym_{2n}$.

Given $\pi \in \cclass{n}$ let $\chords(\pi)$ be the collection of all
$n$ arcs $[i,\pi_i]$ of $\pi$.  Let us introduce the following
labelling scheme $\clabel: \chords(\pi) \to \NN$ of the arcs; for
every arc in $\pi$, call the left endpoint the {\em{opener}} and the
right endpoint the {\em{closer}}.

\begin{minipage}{\textwidth}
\begin{minipage}{8cm}
Label an arc with the number of runs of closers that precede it. For
example, consider the matching $\{[1,3],[2,4],[5,6]\}$, or
equivalently the involution $\pi=341265$. The labels of the arcs are
shown in the diagram to the right.
\end{minipage}
\begin{minipage}{5cm}
\begin{center}
\scalebox{0.8}{\includegraphics{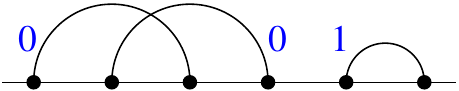}}
\end{center}
\end{minipage}
\end{minipage}

To every Stoimenow matching we shall single out two (very important)
arcs.  Given $\pi \in \cclass{n}$ call $\maxch(\pi)=[\pi_{2n},2n]$ the
{\em{maximal arc}} of $\pi$ and call the arc
$\red(\pi)=[\pi_{1+\pi_{2n}}, 1+\pi_{2n}]$ the {\em{reduction arc}} of
$\pi$.

To every Stoimenow matching we shall associate two statistics:
$$\newl(\pi)= \clabel(\maxch(\pi))\quad\text{and}\quad\newsl(\pi)=\clabel(\red(\pi)).
$$ 
For the matching $\{[1,3],[2,4],[5,6]\}$ we have
$\red(\pi)=[5,6]=\maxch(\pi)$ so that $\newl(341265)=1$ and
$\newsl(341265)=1$.  In the diagrams that follow, vertices that are
openers are marked with a $\bullet$ and closers are marked with a
$\Box$.

\begin{example}\label{examp:labels}
\ \\
\vspace*{-3em}
\begin{enumerate}
\item[(i)] 
\begin{minipage}[t]{14cm}
\begin{minipage}[t]{6cm}
Consider $\pi \, = \,3\,4\,1\,2\,7\,9\,5\,10\,6\,8\,\in\,\cclass{5}$.\\
  We have $\red(\pi)=[6,9]$\\ and $\maxch(\pi)=[8,10]$.\\
  This gives $\newsl(\pi)=1$ and $\newl(\pi)=2$.
\end{minipage}
\begin{minipage}[c]{5cm}
\ \\[1em]
\begin{center}
\scalebox{0.8}{\includegraphics{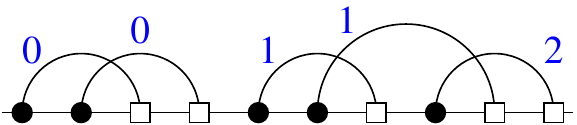}}
\end{center}
\end{minipage}
\end{minipage}
\vspace*{-2em}
\item[(ii)]
\begin{minipage}[t]{14cm}
\begin{minipage}[t]{6cm}
  Consider $\pi\,=\,4\,5\,7\,1\,2\,8\,3\,6 \,10\,9 \, \in \cclass{5}$.
  The labels of the arcs are shown in the diagram.
  We have $\red(\pi)=[9,10]=\maxch(\pi)$ and so $\newl(\pi)=2= \newsl(\pi)=2$.
\end{minipage}
\begin{minipage}[c]{5cm}
\ \\[1em]
\begin{center}
  \scalebox{0.8}{\includegraphics{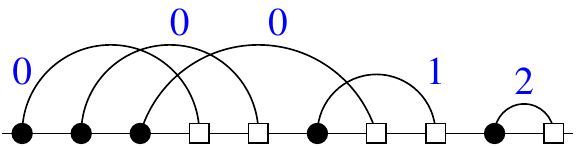}}
\end{center}
\end{minipage}
\end{minipage}
\end{enumerate}
\end{example}

We begin with the removal operations for Stoimenow matchings.  Let
$\pi \in \cclass{n}$ where $n\geq 2$ and let $i=\newsl(\pi)$ be the
label of the reduction arc $\red(\pi)$.  In what follows we will
remove the reduction arc in a very careful way so that we obtain
$\sigma \in \cclass{n-1}$.

Let $L_i(\pi)=\{x \in \chords(\pi): \clabel(x)=i\}$ be the set of arcs
that have label $i$.

\begin{enumerate}
\item[(\subi)] If $|L_i(\pi)|>1$ then simply remove the reduction arc
  $\red(\pi)$.
\item[(\subii)] If $|L_i(\pi)|=1$ and $i=\newl(\pi)$, then
  $\maxch(\pi)=\red(\pi)=[2n-1,2n]$ and we remove this arc from $\pi$.
\item[(\subiii)] If $|L_i(\pi)|=1$ and $i<\newl(\pi)$ then do as
  follows (these steps are illustrated in Figure \ref{ruleremt};
  \begin{enumerate}
  \item[(a)] Let $A$ be the collection of all closers between $x$ and
    the next opener to its right.  Move all points in $A$ to between
    $z$ and $u$ while respecting their order relative to one-another.
  \item[(b)] For all $j$ with $0\leq j<i$, partition the collection of
    openers with label $j$ into three segments $X_j,Y_j$ and $Z_j$
    where $Y_j$ is the collection of openers that have closers in $A$.
    Swap each of the sets $Y_j$ and $Z_j$ while preserving their
    respective internal order.
  \item[(c)] Remove the reduction arc.
  \end{enumerate}
\end{enumerate}

\begin{figure}
\centerline{\scalebox{0.8}{\includegraphics{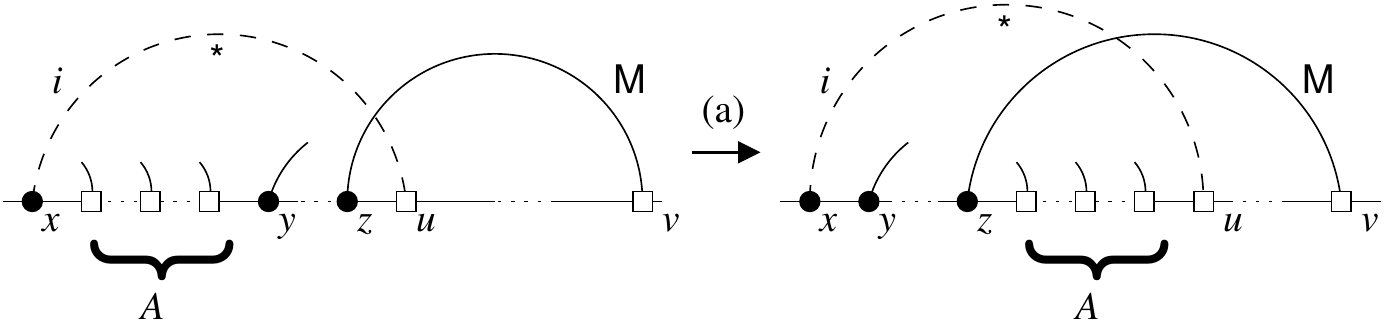}}}
\ \\ 
\centerline{\scalebox{0.8}{\includegraphics{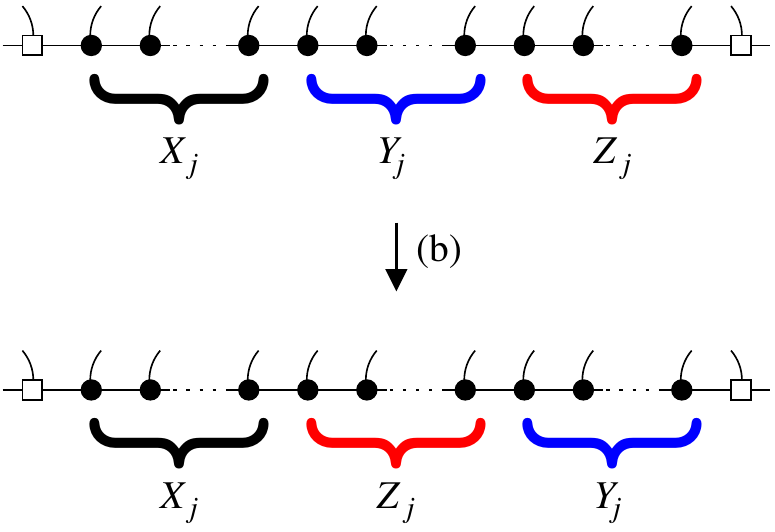}}}
\caption{The removal rule {\subiii}.}
\label{ruleremt}
\end{figure}

\begin{example}
Three examples corresponding to the above removal operations.
\begin{enumerate}
\item[(i)]
In Example~\ref{examp:labels}(i) we had $i=\newsl(\pi)=1$, $\newl(\pi)=2$ and $|L_2(\pi)|=2>1$. Thus rule (\subi) applies and we have $\sigma$:\\[1em]
\centerline{\scalebox{0.8}{\includegraphics{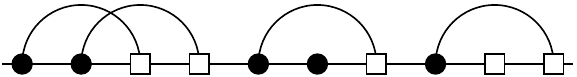}}}\\[1em]
\item[(ii)] In Example~\ref{examp:labels}(ii) we had $i=\newsl(\pi)=2=\newl(\pi)$ and $|L_2(\pi)|=1$. Thus rule (\subii) applies and we have $\sigma$:\\[1em]
\centerline{\scalebox{0.8}{\includegraphics{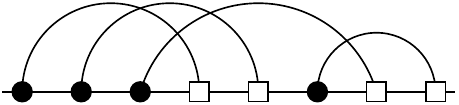}}}

\item[(iii)] 
See Figure \ref{large_example}.
\end{enumerate}
\end{example}

\begin{figure}
\centerline{\scalebox{0.75}{\includegraphics{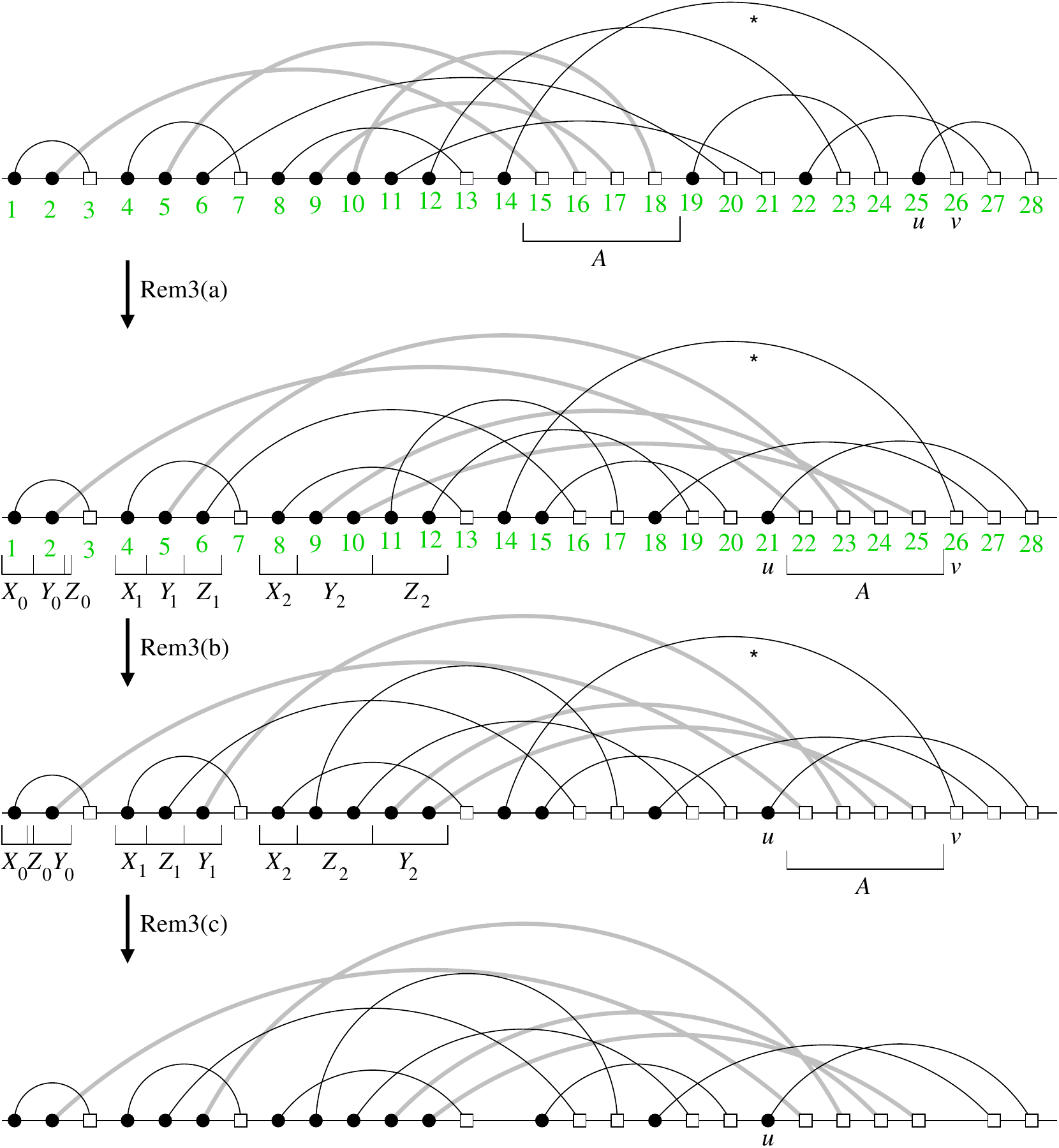}}}
\caption{Illustration of the 3 steps for {\subiii}.}
\label{large_example}
\end{figure}

\begin{example}
See Figure \ref{full_rem_decon} for an example of transforming
a Stoimenow matching into an ascent sequence.
\end{example}

\begin{figure}
\begin{center}
{\scalebox{0.75}{\includegraphics{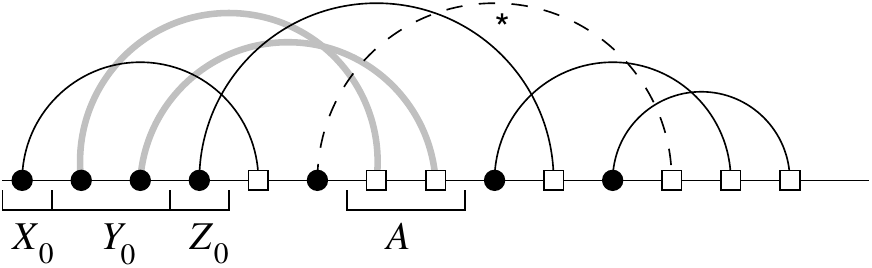}}}\\[1em]
$\downarrow $ (\subiii)  $x_7=1$\\[1em]
{\scalebox{0.75}{\includegraphics{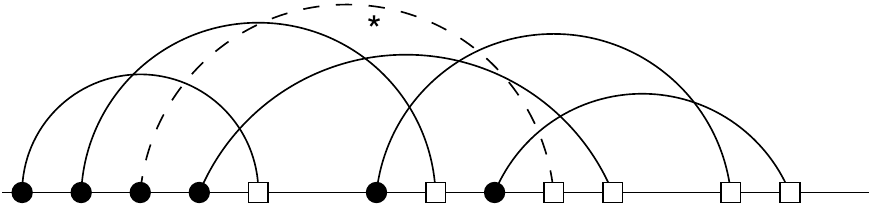}}}\\[1em]
$\downarrow $ (\subi) $x_6=0$\\[1em]
{\scalebox{0.75}{\includegraphics{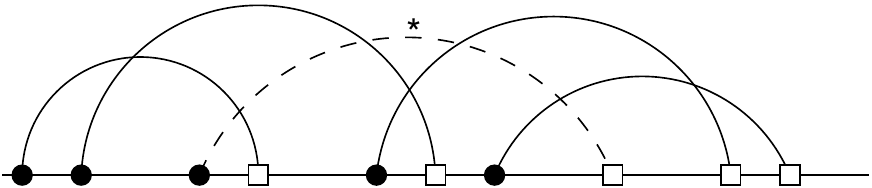}}}\\[1em]
$\downarrow $ (\subi) $x_5=0$\\[1em]
{\scalebox{0.75}{\includegraphics{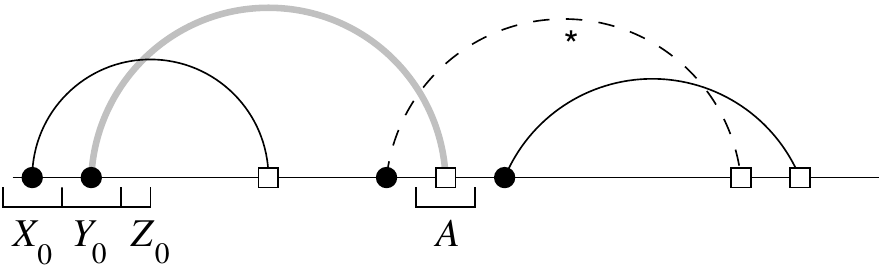}}}\\[1em]
$\downarrow $ (\subiii) $x_4=1$\\[1em]
{\scalebox{0.75}{\includegraphics{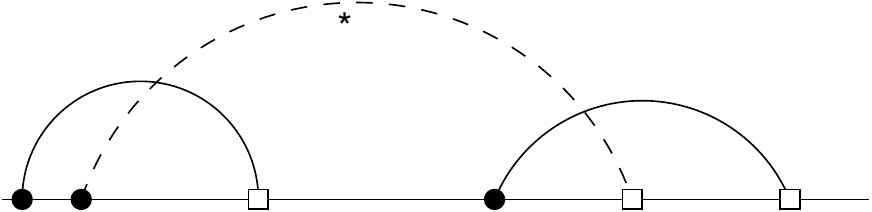}}}\\[1em]
$\downarrow $ (\subi) $x_3=0$\\[1em]
{\scalebox{0.75}{\includegraphics{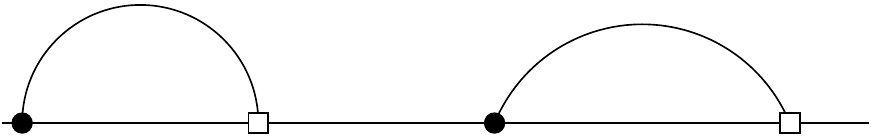}}}\\[1em]
$\downarrow $ (\subii) $x_2=1$ \\[1em]
{\scalebox{0.75}{\includegraphics{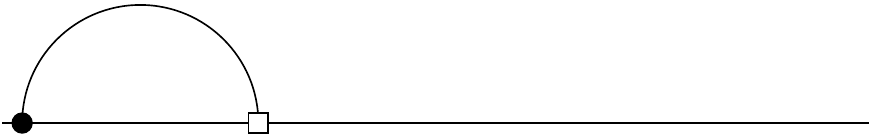}}}
\end{center}
\caption{Using the removal operations to go from the Stoimenow
  matching $\pi \,=\, (5,7,8,10,1,12,2,3,13,4,14,6,9,11) \in
  \cclass{7}$ to the ascent sequence $x=(0,1,0,1,0,0,1)$.}
\label{full_rem_decon}
\end{figure}

We will now prove that the three types of removal operation give 
some $\sigma \in \cclass{n-1}$. If $\newsl(\pi)=i$ and 
the removal operation, when applied to $\pi$ gives $\sigma$,
then define $\psi'(\pi)=(\sigma,i)$.

\begin{lemma}\label{arc:remove}
If $n\geq 2$, $\pi \in \cclass{n}$ and $\psi'(\pi)=(\sigma,i)$ then
$\sigma \in \cclass{n-1}$ and $0\leq i \leq 1+\newl(\pi)$. Also,
\begin{eqnarray*}
\newl(\sigma) &=&
\left\{
	\begin{array}{ll}
	\newl(\pi) & \mbox{ if } i \leq \newsl(\sigma),\\
	\newl(\pi)-1 & \mbox{ if } i > \newsl(\sigma).
	\end{array}
\right.
\end{eqnarray*}
\end{lemma}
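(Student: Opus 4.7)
The plan is to treat the three removal rules \subi, \subii, \subiii\ separately, verifying in each that $\sigma \in \cclass{n-1}$ and that the stated formula for $\newl(\sigma)$ holds. The bound on $i$ can be argued uniformly before the case split, so I would handle it first.

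For $0 \le i \le 1+\newl(\pi)$, non-negativity is automatic from $i = \newsl(\pi) = \clabel(\red(\pi)) \ge 0$. For the upper bound I would show that the opener $\pi_{1+\pi_{2n}}$ of $\red(\pi)$ lies strictly to the left of the opener $\pi_{2n}$ of $\maxch(\pi)$, so that $i = \newsl(\pi) \le \newl(\pi)$. This rests on two observations: the position $1+\pi_{2n}$ must be a closer (otherwise its partner would lie strictly between $\pi_{2n}$ and $2n$, creating an arc nested inside $\maxch(\pi)$ whose opener is adjacent to the opener of $\maxch(\pi)$, a configuration ruled out by the ban on Type 1/Type 2 arcs); and $\pi_{2n}$ is already paired with $2n$, so the partner of $1+\pi_{2n}$ cannot equal $\pi_{2n}$ either. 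Hence $\pi_{1+\pi_{2n}} < \pi_{2n}$.

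Rule \subii\ is immediate: $\red(\pi)=\maxch(\pi)=[2n-1,2n]$ is the rightmost arc, deleting it leaves every other endpoint untouched, so $\sigma \in \cclass{n-1}$ and the label of the new maximal arc can be read off directly from $\pi$. Rule \subi\ is almost as easy for the Stoimenow claim, since only the reduction arc is removed and the relative order of the remaining endpoints is preserved, so no Type 1 or Type 2 pattern can be created. For the label of $\maxch(\sigma)$ I would examine the two neighbours in $\pi$ of the deleted opener $p = \pi_{1+\pi_{2n}}$: if both are closers then deletion of $p$ merges two runs and $\newl$ drops by one, otherwise $\newl$ is unchanged. I would then match this dichotomy to the condition $i > \newsl(\sigma)$ by locating $\red(\sigma)$ in the shifted matching and reading off its label, the key being that the new reduction arc sees exactly those runs of closers that survived intact near the deletion site.

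The main obstacle is rule \subiii, where both the translation of the closer block $A$ and the pairwise swaps of $Y_j, Z_j$ for $0 \le j < i$ must be shown not to manufacture any Type 1 or Type 2 pattern. I would verify steps (a), (b) and (c) in sequence, using at each stage the hypotheses $|L_i(\pi)|=1$ and $i < \newl(\pi)$ together with the fact that every arc involved in the swap has label strictly less than $i$, so that its interaction with arcs of higher label is rigidly constrained by the labelling scheme. For the label identity I would track how the translation and swaps rearrange the runs of closers lying before the new max-arc opener of $\sigma$, checking that the net change in the count is $0$ precisely when $i \le \newsl(\sigma)$ and $-1$ otherwise. I expect the bulk of the work to lie in the Stoimenow-preservation check for step (b): the combinatorial bookkeeping of the swap of the $Y_j$ and $Z_j$ segments is the most intricate part, and it is there that the detailed constraints imposed by $|L_i(\pi)|=1$ will be used most heavily.
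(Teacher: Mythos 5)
Your preliminary observations are sound: the bound $0\le i\le 1+\newl(\pi)$ does follow from showing that $1+\pi_{2n}$ must be a closer (else its arc would nest inside $\maxch(\pi)$ with adjacent openers), and {\subii} really is immediate. Your plan for {\subiii} correctly locates the hard work, though it remains a plan rather than a proof. The genuine gap is your treatment of {\subi}. You assert that because only the reduction arc is deleted and the relative order of the surviving endpoints is preserved, no Type 1 or Type 2 configuration can be created. That inference is invalid: the forbidden configurations are not order patterns but \emph{adjacency} patterns (nested arcs whose openers, or closers, occupy consecutive positions), and deleting two points creates new adjacencies at the two deletion sites. A pair of arcs that is nested but not forbidden in $\pi$, solely because an endpoint of the deleted arc sat between their openers (or closers), can become forbidden after the deletion. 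Concretely, $\{[1,3],[2,5],[4,10],[6,7],[8,11],[9,12]\}$ is a Stoimenow matching, yet deleting the arc $[2,5]$ leaves $[4,10]$ and $[6,7]$ nested with adjacent openers, which is prohibited. So ``order-preserving deletion preserves the Stoimenow property'' is false in general, and the {\subi} case genuinely requires the adjacency check at the two endpoints of $\red(\pi)$ that the paper carries out (splitting on whether $\newsl(\pi)=\newl(\pi)$, whether the blocks of points flanking the deleted endpoints are empty, and whether the newly adjacent points are openers or closers); it is only there that the hypothesis $|L_i(\pi)|>1$ and the special position of the reduction arc enter.

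A secondary point: your dichotomy for $\newl(\sigma)$ under {\subi} (``if both neighbours of the deleted opener are closers then $\newl$ drops by one'') chases a case that cannot occur. Arcs sharing a label have their openers in a single contiguous run, so $|L_i(\pi)|>1$ forces the opener of $\red(\pi)$ to have an opener as a neighbour; hence no two runs of closers merge. Likewise the deleted closer sits at $1+\pi_{2n}$, to the right of the opener of $\maxch(\pi)$, so it cannot affect that label. Thus $\newl(\sigma)=\newl(\pi)$ unconditionally in case {\subi}, and the remaining task --- which both you and the paper leave largely implicit --- is to match the three cases {\subi}, {\subii}, {\subiii} against the dichotomy $i\le\newsl(\sigma)$ versus $i>\newsl(\sigma)$ in the statement.
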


\begin{proof}
In this proof we show that each of the three removal operations, when
applied to a Stoimenow matching, produce another Stoimenow matching.
The removal of an arc from a Stoimenow matching produces a matching,
but it is necessary to show the matching is Stoimenow, i.e. does not
contain type 1 or type 2 arcs.

In both {\subi} and {\subii} we are simply deleting the reduction arc.
Thus the only neighbouring points to check the Stoimenow property (no
type 1 or type 2 nestings) are the pairs of points adjacent to the
left and right endpoints of $\red(\pi)$.  However for {\subiii} the
situation is slightly more complicated.

In the diagrams, lozenge vertices $\lozenge$ correspond to points
which could be openers or closers and the reduction arc is indicated
by $\star$.

\begin{minipage}{\textwidth}
\begin{minipage}{7cm}
(\subi) In this case $|L_i(\pi)|>1$.  We must check that the removal
of the reduction arc $\red(\pi)$ does not introduce a type 1 or 2 arc
in $\sigma$.  If $\newsl(\pi)=\newl(\pi)$ then we have the 
situation as indicated to the right.
\end{minipage}
$\hspace*{1em}$
\begin{minipage}{7cm}
{\scalebox{0.8}{\includegraphics{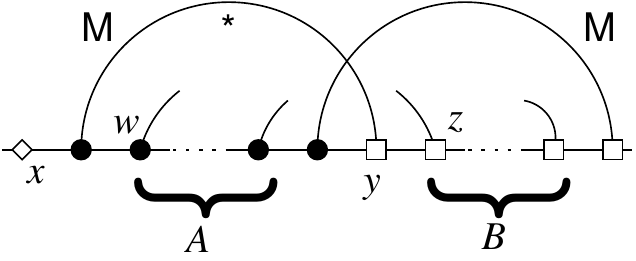}}}
\end{minipage}
\end{minipage}

If the
set of points $A$ is empty then the set $B$ must be empty, for
otherwise the arcs with endpoints $y$ and $z$ are type 1.  By the same
argument, if $B$ is empty then so is $A$.  This gives the following
situation if $A=B=\emptyset$:

\begin{minipage}{\textwidth}
\begin{minipage}{7cm}
{\scalebox{0.8}{\includegraphics{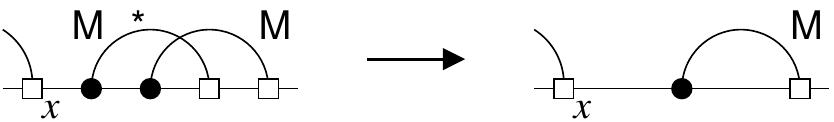}}}
\end{minipage}
$\hspace*{1em}$
\begin{minipage}{5.1cm}
The
point $x$ must be a closer since there are no available points to its
right.  Removing the reduction arc preserves the Stoimenow property.
\end{minipage}
\end{minipage}

Alternatively, $A$ is not empty iff $B$ is not empty. In fact all arcs
with opener in $A$ have a closer in $B$.  Similarly, all closers in
$B$ have openers in $A$ (for otherwise a type 1 arc would appear).
Also, $x$ must be a closer, for otherwise a type 2 arc arises with $x$
and $w$ as endpoints.  We have the following situation:\\[1em]
\centerline{\scalebox{0.8}{\includegraphics{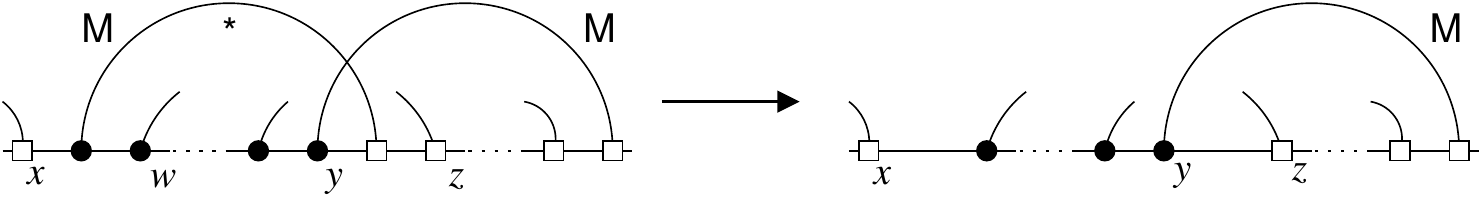}}}\\[1em] It
is straightforward to see that the removal of the opener of the
reduction arc preserves the Stoimenow property.

\begin{minipage}{\textwidth}
\begin{minipage}{7cm}
If $\newsl(\pi)<\newl(\pi)$ then there are at least 2 arcs with label
$\newsl(\pi)$.  Consequently, at least one of $x$ and $y$ in the
following diagram must be an opener. Also, note that there must be a
closer between the openers of $\red(\pi)$ and $\maxch(\pi)$.
\end{minipage}
$\hspace*{1em}$
\begin{minipage}{7cm}
\scalebox{0.8}{\includegraphics{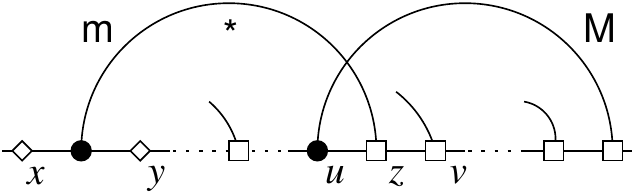}}
\end{minipage}
\end{minipage}

First note that the Stoimenow property is preserved at the newly
adjacent points $u$ and $v$ once $z$ is removed.  Next, if $x$ is a
closer then $y$ must be an opener. Thus removing the opener of
$\red(\pi)$ preserves the Stoimenow property at (the newly adjacent
points) $x$ and $y$.

If $x$ is an opener, then $y$ can be an opener or a closer.  In the
case that $y$ is a closer, then the Stoimenow property is preserved.
If $x$ and $y$ are both openers then the endpoint of $y$ must be to
the left of $z$ since it is a Stoimenow matching.  Similarly, the
endpoint of $x$ must be to the left of $u$. Thus the Stoimenow
property is preserved at the newly adjacent points $x$ and $y$.  In
the arguments above, the label of the maximal arc remains unchanged,
hence $\newl(\pi)=\newl(\sigma)$.

(\subii) In this case $|L_i(\pi)|=1$ and $i=\newl(\pi)$.  There is a
unique arc $[2n-1,2n]$ in $\pi$ that has maximal label $\newl(\pi)$.
Removing this arc of course yields $\sigma \in \cclass{n-1}$. Since
this arc does not cross any other arcs in the diagram, its removal
cannot induce a type 1 or type 2 arc.  It was the only arc with label
$\newl(\pi)$ so we have $\newl(\sigma)=\newl(\pi)-1$.

(\subiii) In this case $|L_i(\pi)|=1$ and $i<\newl(\pi)$.  We must
check that the matching obtained after performing operations (a), (b)
and (c) is Stoimenow.  It is not necessarily true that the matching is
Stoimenow after performing (a).  The combination of (a), (b) and (c)
is needed to ensure the Stoimenow property.  See Figure
\ref{general_rem3} for an illustration of {\subiii}.

Note that $0\leq j<i$.  Let $A$ be the run of closers between the
opener of the reduction arc and the next opener to its right.  Let $B$
be the segment whose leftmost point is the opener to the right of $A$
and whose rightmost point is the opener of the maximal arc.  Let $C$
be the run of closers that is to the right of the closer of the
reduction arc, and to the left of the rightmost closer.

There are only certain places in $\sigma$ where the Stoimenow property
may have been broken.  The segments of openers $W_j=(X_j,Y_j,Z_j)$ in
$\pi$ are separated by closers.  Thus no two arcs from two different
$W_k$'s (where $k<i$) can form a type 2 pair in $\sigma$.  Hence we
may restrict our attention to one segment of openers $W_j$ and the
action of steps (a), (b) and (c) on this segment and its interaction
with $A$, $B$ and $C$.

After {\subiii} has been applied, the internal order of each of the
$X_j$, $Y_j$ and $Z_j$ segments is unaltered.  Thus the Stoimenow
property cannot be broken within each of these segments.  However, the
order in which the segments $Y_j$ and $Z_j$ appear in $\sigma$ has
been transposed.  Similarly, the segments $A$, $B$ and $C$ retain
their internal order so that the Stoimenow property is not violated
within each.

By this reasoning there are only six cases to consider where the Stoimenow property may be broken. 
These are indicated by roman numerals in Figure \ref{general_rem3}.

\begin{figure} 
\centerline{\scalebox{0.8}{\includegraphics{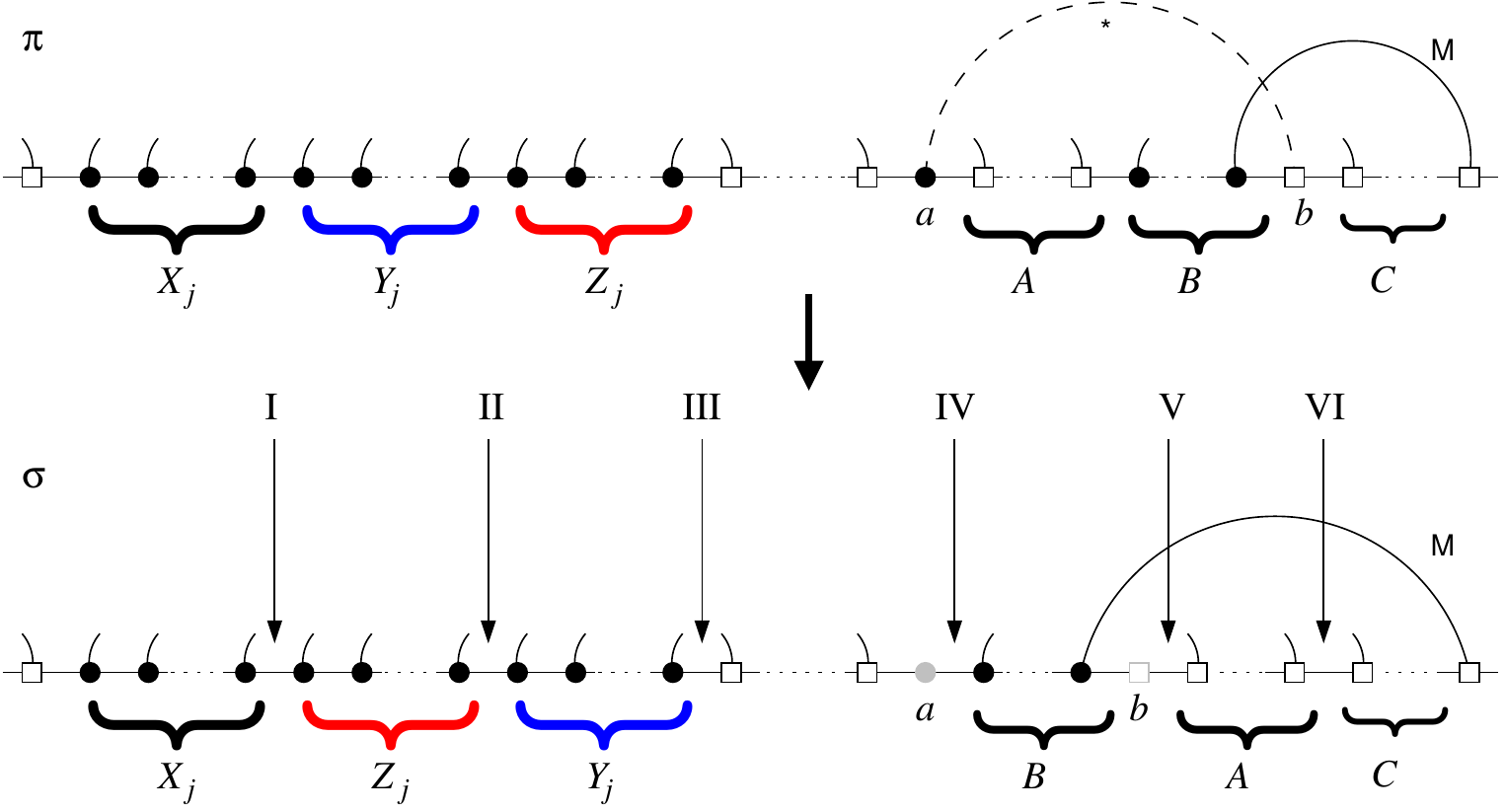}}}
\caption{}
\label{general_rem3}
\end{figure}

The adjacent points in cases III, IV and V are such that one point is
an opener and the other is a closer, thereby preserving the Stoimenow
property at these positions.

\begin{itemize}
\item[(I)] If $X_j$ is empty then there is no opener immediately to
  the left of $Z_j$ in $\sigma$ with which to form a type 2 arc.
  Otherwise $X_j$ is not empty and in $\sigma$ the closers
  corresponding to $X_j$ are located to the left of $a$, whereas
  closers corresponding to $Z_j$ are in $B$ which is to the right of
  $a$.  Hence the Stoimenow property is preserved.

\item[(II)] If both $Z_j$ and $X_j$ are empty then there is no opener
  immediately to the left of $Y_j$.  If $Z_j$ is empty and $X_j$ is
  not empty then the closers of $X_j$ are to the left of $a$ and the
  closers of $Y_j$ are to the right of $a$.  If $Z_j$ is not empty
  then the closers corresponding to $Z_j$ are in $B$.  The closers
  corresponding to $Y_j$ are in $A$.  Since $A$ is to the right of $B$
  in $\sigma$, the new neighbors do not form a prohibited type 2 arc.

\item[(VI)] If $C$ is empty then we have two adjacent closers at the
  end of $\sigma$.  The opener corresponding to the rightmost opener
  of $A$ is to the left of $b$ so the Stoimenow property is preserved.
  Otherwise $C$ is non-empty and the openers corresponding to $A$ are
  $Y_j$, whereas the openers corresponding to $C$ are in $B$. Since
  $Y_j$ is to the left of $B$, the new neighbors do not form the
  prohibited type I arc.
\end{itemize}

The arc that was removed was the only arc in $\pi$ with label $i$, so
$\newl(\sigma)=\newl(\pi)-1$.
\end{proof}

\section{Adding an edge to a Stoimenow matching}
We now define the addition operation for Stoimenow matchings.
Given $\sigma \in \cclass{n-1}$ and $0\leq i \leq 1+\newl(\sigma)$,
let $\varphi ' (\sigma , i)$ be the Stoimenow matching $\pi$ obtained from $\sigma$
according to the following addition rules

\begin{enumerate}
\item[(\addi)] If $i\leq \newsl(\sigma)$ then partition the segment 
of openers with label $i$ into two (possibly empty) segments:
let $A$ be the contiguous segment of openers which do not intersect the maximal 
arc and let $B$ be the contiguous segment of openers that do intersect the maximal arc. 
Note the $A$ is always to the left of $B$.
Insert an arc by introducing a new point between $A$ and $B$, and another new point immediately to the right of $\pi_{n-1}$.
(See Figure \ref{ruleaddone}.)

\item[(\addii)] If $i=1+\newl(\sigma)$ then introduce the arc $[2n-1,2n]$ to $\sigma$.

\item[(\addiii)] If $\newsl(\sigma) < i \leq \newl(\sigma)$ then do as follows (each of these steps is illustrated in Figure \ref{ruleaddt})
\begin{enumerate}
	\item[(a)] Locate the first opener of $\sigma$ with label $i$ and call it $d$. 
		Insert an imaginary vertical line $L$ in the diagram just before $d$.
		Let $A$ be the contiguous segment of closers immediately right of the opener of the maximal arc, $c$, 
		whose openers lie to the right of $L$. 
		Let $C$ be the segment of points after $A$ and before the rightmost point of $\sigma$.
		Insert two new points: one where the line $L$ crosses the diagram, $a$, and another in-between $A$ and $C$, $b$. 
		Join these points by an arc.
	\item[(b)] For all $0\leq j <i$, partition the segments of openers with label $j$ into three segments $X_j,Y_j$ and $Z_j$.
			The arcs from $X_j$ have closers that lie to the left of $L$.
			The arcs from $Y_j$ have closers that are in $A$. $Z_j$ is what remains.
			Swap the segments $Z_j$ and $Y_j$ for each $j$ while preserving the internal order of
			the openers.
	\item[(c)] Finally, move the segment of closers $A$ in-between the points $a$ and $d$.
\end{enumerate}
\end{enumerate}

\begin{figure}
\centerline{\scalebox{0.8}{\includegraphics{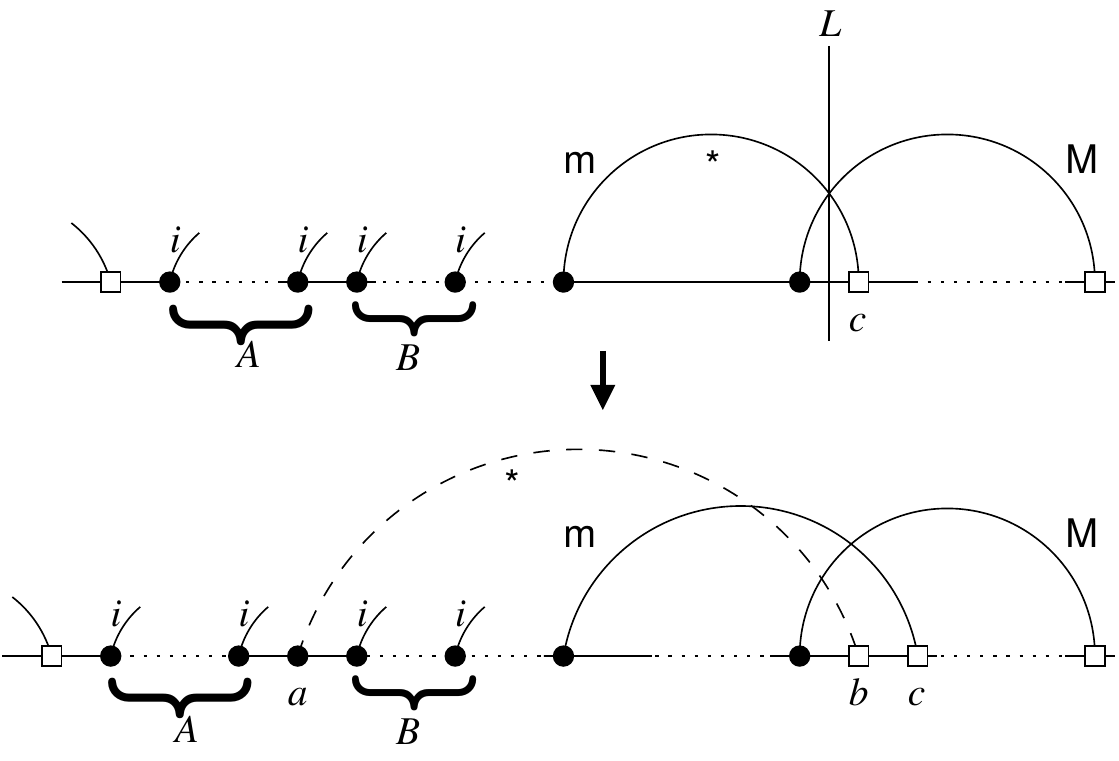}}}
\caption{The addition rule {\addi}.}
\label{ruleaddone}
\end{figure}

\begin{figure}
\centerline{\scalebox{0.75}{\includegraphics{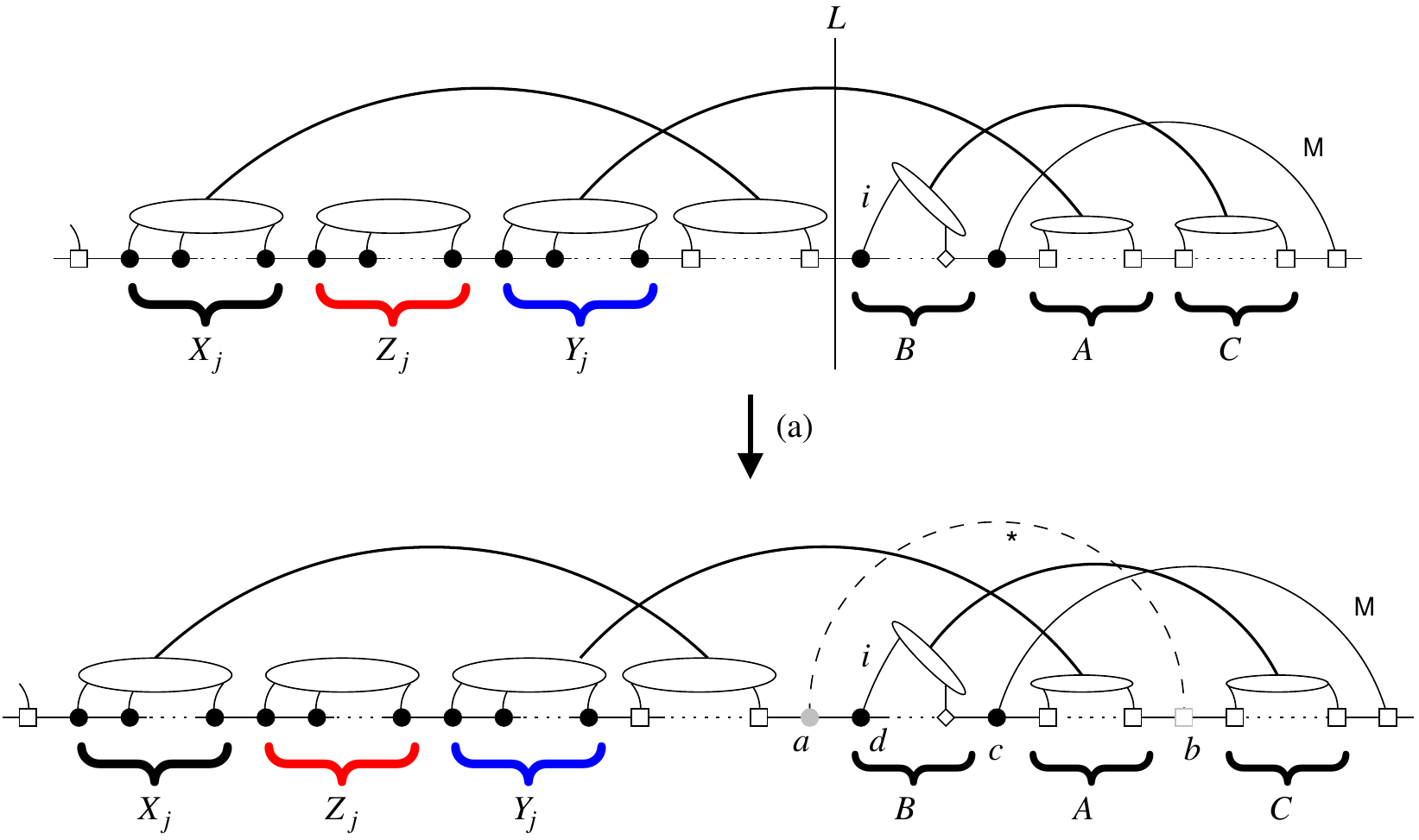}}}
\ \\ 
\centerline{\scalebox{0.75}{\includegraphics{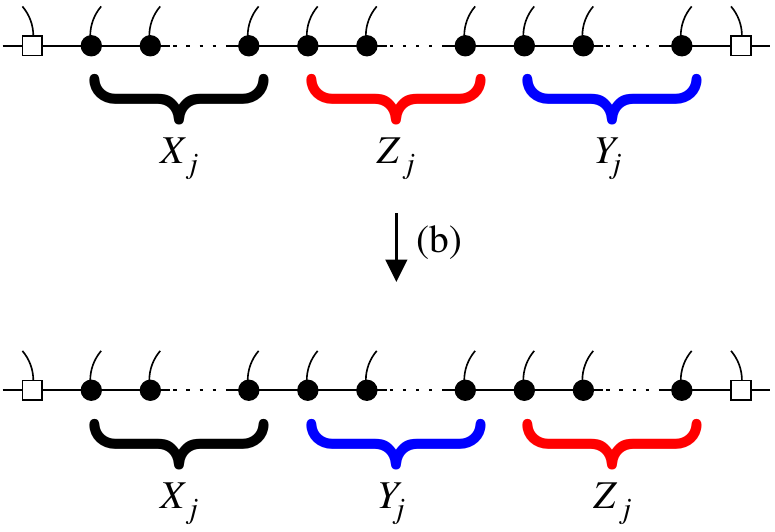}}}
\ \\ 
\centerline{\scalebox{0.75}{\includegraphics{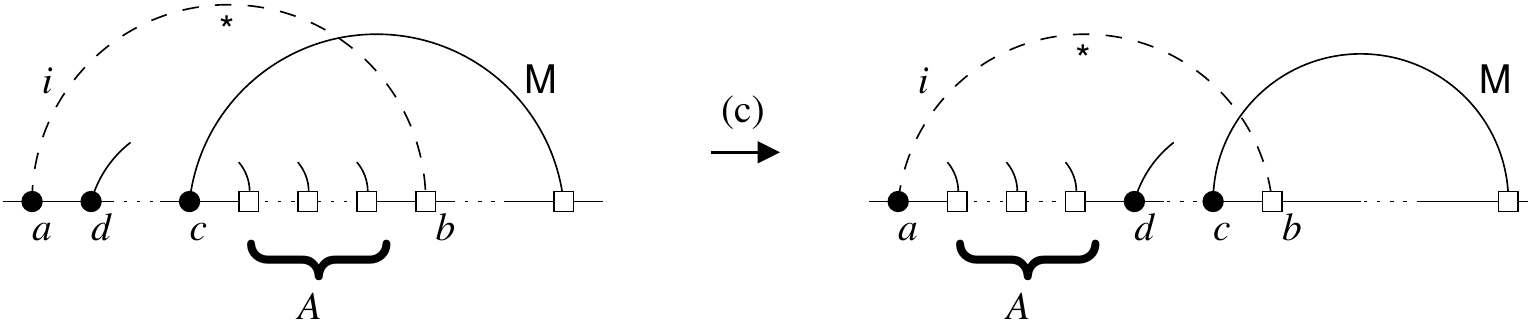}}}
\caption{The addition rule {\addiii}.}
\label{ruleaddt}
\end{figure}

\begin{lemma}
If $n\geq 2$, $\sigma \in \cclass{n-1}$, $0\leq i \leq 1+\newl(\sigma)$ and $\pi=\varphi '(\sigma, i)$ then
$\pi \in \cclass{n}$. Also, 
 $$
  \newl(\pi) =
  \begin{cases}
    \newl(\sigma)   & \text{if } i \leq \newsl(\sigma),\\
    \newl(\sigma)-1 & \text{if } i > \newsl(\sigma).
  \end{cases}
  $$
\end{lemma}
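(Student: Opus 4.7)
The plan is to imitate the proof of Lemma~\ref{arc:remove}, treating each of the three addition rules in turn, and verifying both that the output is a Stoimenow matching (no Type~1 or Type~2 arcs are introduced) and that the claimed formula for $\newl(\pi)$ holds. An alternative route would be to check that $\varphi'$ is a two-sided inverse of $\psi'$ and invoke Lemma~\ref{arc:remove}, but a direct case analysis seems cleaner, because the Stoimenow property of $\pi$ is needed before one can meaningfully apply $\psi'$ to it.

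For (\addii) the verification is immediate: the new arc $[2n-1,2n]$ is placed strictly to the right of every point of $\sigma$, so it does not nest with any existing arc and cannot participate in any Type~1 or Type~2 configuration; it becomes the new maximal arc, and counting closer-runs gives the label update directly from the definition of $\clabel$. For (\addi), only two local adjacencies are affected: the new opener is inserted between the sub-segments $A$ and $B$ of openers of common label $i\leq\newsl(\sigma)$, and the new closer is placed just after $\pi_{n-1}$. I would check that no Type~1 or Type~2 arc is produced at either of these adjacencies, using the fact that the openers in $B$ all nest inside the maximal arc of $\sigma$ whereas those in $A$ do not. Since the new arc does not intersect the maximal arc of $\sigma$, the maximal arc remains unchanged and its label is preserved in~$\pi$.

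The substantive case is (\addiii). Here step~(a) alone does not in general produce a Stoimenow matching: the transposition of the $Y_j$ and $Z_j$ segments in step~(b), together with the relocation of the closer-run $A$ in step~(c), is exactly what restores the property. The key remark is that (\addiii) is designed to reverse (\subiii), so the analysis reduces to the same six boundary positions I--VI studied in Lemma~\ref{arc:remove}. At each of these positions I would verify, by arguments symmetric to those already carried out in the removal lemma, that the insertion of the two new points and the swap of the triples $(X_j,Y_j,Z_j)$ interact correctly with the neighbouring openers and closers, handling separately the sub-cases in which one or more of $X_j$, $Y_j$, $Z_j$ is empty. The new arc is the unique arc of $\pi$ with label $i$, and the shift of the remaining labels yields the claimed value of $\newl(\pi)$.

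The principal obstacle is the bookkeeping in (\addiii): one must confirm that simultaneously transposing the interleaved triples $(X_j,Y_j,Z_j)$ for all $j<i$, relocating the closer-run $A$, and inserting the two new points conspire to leave every local configuration non-nested in the forbidden way. Once the correspondence with (\subiii) is set up explicitly, however, each of these checks is formally identical to one already performed in Lemma~\ref{arc:remove}, so the proof essentially consists of reading that earlier case analysis backwards.
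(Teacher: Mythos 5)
Your proposal follows essentially the same route as the paper: a case analysis over the three addition rules, with direct local checks for {\addi} and {\addii}, and for {\addiii} a reduction to the six boundary positions already analysed in Lemma~\ref{arc:remove} (the paper handles {\addiii} by citing the corresponding argument of \cite{bcdk} rewritten in the language of matchings, which amounts to the same ``read the removal analysis backwards'' plan you describe). One caveat: for $i>\newsl(\sigma)$ the construction actually gives $\newl(\pi)=\newl(\sigma)+1$ --- as the paper's own treatment of the {\addii} case derives, and as consistency with Lemma~\ref{arc:remove} requires --- so the $\newl(\sigma)-1$ in the statement is a typo that your phrases ``gives the label update directly'' and ``yields the claimed value'' pass over without comment.
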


\begin{proof}
The proof of this requires examining the three addition operations separately.

For the first case $i\leq \newsl(\sigma)$ and $\addi$ is used.  This
is illustrated in Figure \ref{ruleaddone}.  It introduces a new arc
(the end points of this arc are $a$ and $b$ in the figure) which has
label $i$ and serves as the new reduction arc.  Since arcs with
openers in $B$ have closers to the right of $c$, and arcs with openers
in $A$ have closers to the left of $c$, the Stoimenow property is
preserved.  It is easy to see from the diagram that the Stoimenow
property is preserved.  Furthermore, since this new arc is essentially
a copy of arcs with label $i$, $\newl(\pi)=\newl(\sigma)$.

If $i=1+\newl(\sigma)$ then {\addii} is used. 
A new arc is added to the matching on the right hand side. 
This arc does not meet any other arcs so it retains 
the property of being Stoimenow. Also, the label of this arc
will be one more than $\newl(\sigma)$ so that $\newl(\pi)=\newl(\sigma)+1$.

If $\newsl(\sigma)<i\leq \newl(\sigma)$ then {\addiii} is used.  The
details of this part of the proof are the same as the final part of
\cite[Lemma 4]{bcdk}, re-written in the language of matchings as in
Lemma \ref{arc:remove}.
\end{proof}

The machinery has now been set up so that we can see that the recursive structure
of Stoimenow matchings is isomorphic to that of ascent sequences.
We omit the formal proof by induction of the following result which 
gives the compatibility of the removal and addition operations for Stoimenow matchings.

\begin{lemma}
  For any Stoimenow matching $\sigma$ and integer $i$ such that $0\leq
  i\leq 1+\newl(Q)$ we have $\cremove(\cmyadd(\sigma,i)) =
  (\sigma,i)$.  If $\sigma$ has more than one element then we also
  have $\cmyadd(\cremove(\sigma)) = \sigma$.
\end{lemma}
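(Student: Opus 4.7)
The plan is a case analysis on which of the three addition rules produces $\pi := \cmyadd(\sigma,i)$ from $(\sigma,i)$, and symmetrically on which of the three removal rules applies to $\sigma$. The two identities are parallel, so I outline the strategy for $\cremove(\cmyadd(\sigma,i)) = (\sigma,i)$; the converse $\cmyadd(\cremove(\sigma)) = \sigma$ then follows from the same kind of argument, with Lemma~\ref{arc:remove} selecting the label $i = \newsl(\sigma)$ that is fed back into $\cmyadd$.

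For each addition rule I would verify two things: (a) that the newly added arc becomes $\red(\pi)$, and that the value of $|L_i(\pi)|$ together with the relation of $i$ to $\newl(\pi)$ triggers exactly the partner removal rule, under the pairings $\addi\leftrightarrow\subi$, $\addii\leftrightarrow\subii$, $\addiii\leftrightarrow\subiii$; and (b) that the geometric action of that removal rule inverts the geometric action of the addition rule step by step. Rule $\addii$ is immediate: the inserted arc $[2n-1,2n]$ is the unique arc of the new maximal label $\newl(\sigma)+1$, it is both the maximal and the reduction arc of $\pi$, and $\subii$ simply deletes it. Rule $\addi$ is almost as easy: the inserted arc has label $i \le \newsl(\sigma)$ and is $\red(\pi)$, while at least one label-$i$ arc survives from $\sigma$, so $|L_i(\pi)|\ge 2$ and $\subi$ fires and just deletes the reduction arc.

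The main obstacle is the pair $\addiii/\subiii$, with $\newsl(\sigma) < i \le \newl(\sigma)$. Here $\addiii$ has three substeps (insert the arc $[a,b]$; swap the triples $Y_j,Z_j$ for each $j<i$; relocate the closer-block $A$ to between $a$ and $d$), and I need to verify that in $\pi$ the arc $[a,b]$ is the unique label-$i$ arc with $i < \newl(\pi)$, so that $\subiii$ fires, and then that the three substeps of $\subiii$ undo those of $\addiii$ in reverse order. Concretely: the $Y_j,Z_j$ swap is an involution, so step (b) is its own inverse; the segment $A$ that $\addiii(c)$ moves leftward is the same segment $A$ that $\subiii(a)$ moves rightward; and $\subiii(c)$ deletes the arc that $\addiii(a)$ inserted. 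The real bookkeeping is to check that the partitions $(X_j,Y_j,Z_j)$ and the auxiliary segments $A,B,C$ (with the line $L$) identified when $\subiii$ is applied to $\pi$ agree with those identified when $\addiii$ is applied to $\sigma$; this follows by tracking labels and relative positions through the substeps and using that $[a,b]$ is the only label-$i$ arc of $\pi$.
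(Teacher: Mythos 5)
The paper gives no proof of this lemma to compare against: the authors explicitly write ``We omit the formal proof by induction of the following result,'' deferring in spirit to the analogous statement for posets in \cite{bcdk}. Your outline is the natural (and surely intended) argument: pair the rules $\addi\leftrightarrow\subi$, $\addii\leftrightarrow\subii$, $\addiii\leftrightarrow\subiii$, check that the inserted arc is the reduction arc of $\pi=\cmyadd(\sigma,i)$ and that the value of $|L_i(\pi)|$ and the comparison of $i$ with $\newl(\pi)$ trigger exactly the partner removal rule, then check that the substeps invert one another. Your treatment of $\addi$ and $\addii$ is essentially complete (for $\addi$ it is worth noting explicitly why $L_i(\sigma)\neq\emptyset$: every label between $0$ and $\newl(\sigma)$ is attained, since consecutive runs of closers are separated by at least one opener; and why inserting the new closer immediately after the maximal arc's opener creates no new run of closers, so the surviving arcs keep their labels).

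What you have for the $\addiii/\subiii$ case, however, is a plan rather than a proof. The entire content of the lemma in that case is the ``real bookkeeping'' you defer: one must actually show that in $\pi$ the new arc is the \emph{unique} arc of label $i$ (this hinges on the relocation of the closer block $A$ in step (c) of $\addiii$, which shifts the labels of the former label-$i$ openers up), and that the line $L$, the blocks $A,B,C$ and the partitions $(X_j,Y_j,Z_j)$ recomputed by $\subiii$ on $\pi$ coincide with those used by $\addiii$ on $\sigma$. A second point your symmetry appeal glosses over: in the direction $\cmyadd(\cremove(\sigma))=\sigma$ it is not automatic that the matching addition case fires. Writing $\cremove(\sigma)=(\tau,i)$ with $i=\newsl(\sigma)$, you must verify, e.g., that after $\subi$ one has $i\leq\newsl(\tau)$ (the new reduction arc of $\tau$ has label at least $i$) so that $\addi$ rather than $\addiii$ applies, and that after $\subiii$ one has $\newsl(\tau)<i\leq\newl(\tau)$. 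These case-matching facts require the label computations of Lemma~\ref{arc:remove} plus a short additional argument about $\newsl(\tau)$, which the lemma does not supply. So the architecture is right and consistent with the authors' intent, but the two items above are exactly where a written-out proof would live.
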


\section{Stoimenow matchings to ascent sequences}
Define the map $\Psi' :\mathcal{I}_n \to \mathcal{A}_n$ as follows.
For $n=1$ we associate the only Stoimenow matching in $\cclass{1}$
with the sequence $(0)$.  Let $n\geq 2$ and suppose that the removal
operation, applied to $\pi \in \cclass{n}$ gives
$\cremove(\pi)=(\sigma,i)$.  Then the sequence associated with $\pi$
is $\Psi '(\pi) := (x_1,\ldots , x_{n-1},i)$ where $(x_1,\ldots ,
x_{n-1})=\Psi '(\sigma)$. 
Combining the previous lemmas, we have the following theorem that is easily proved by induction.
\begin{theorem}
The map $\Psi '$ is a one-to-one correspondence between Stoimenow matchings with $n$
arcs and ascent sequences of length $n$.
\end{theorem}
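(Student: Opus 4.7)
The plan is to induct on $n$, running the argument in parallel with the recursive definitions of $\cclass{n}$ and $\Asc{n}$. The base case $n=1$ is immediate since each side is a singleton, matched by the convention $\Psi'$ imposes.

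For the inductive step I would strengthen the inductive hypothesis to carry two statements in tandem: (a) that $\Psi'$ is a bijection $\cclass{k}\to\Asc{k}$ for every $k<n$, and (b) the statistic-matching invariant $\asc(\Psi'(\tau))=\newl(\tau)$ for every $\tau$ of size less than $n$. Invariant (b) is indispensable because it is what converts the bound on $i$ furnished by Lemma~\ref{arc:remove} into exactly the bound appearing in the definition of $\Asc{n}$. Concretely, write $\psi'(\pi)=(\sigma,i)$ for $\pi\in\cclass{n}$. By induction $\Psi'(\sigma)=(x_1,\ldots,x_{n-1})\in\Asc{n-1}$ with $\asc(\Psi'(\sigma))=\newl(\sigma)$. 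A case split on whether $i\leq\newsl(\sigma)$ or $i>\newsl(\sigma)$, using the $\newl$-update clause of Lemma~\ref{arc:remove}, yields $0\leq i\leq 1+\newl(\sigma)=1+\asc(\Psi'(\sigma))$, so $(x_1,\ldots,x_{n-1},i)=\Psi'(\pi)$ lies in $\Asc{n}$; the same split (an ascent is created iff $i>x_{n-1}$ iff $\newl$ jumps) propagates (b) to $\pi$.

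Bijectivity at step $n$ then falls out of the compatibility lemma. For injectivity, if $\Psi'(\pi_1)=\Psi'(\pi_2)$ their last entries agree, say both equal $i$, and stripping them leaves $\Psi'(\sigma_1)=\Psi'(\sigma_2)$, whence $\sigma_1=\sigma_2$ by induction; the identity $\varphi'\circ\psi'=\mathrm{id}$ then forces $\pi_1=\varphi'(\sigma_1,i)=\varphi'(\sigma_2,i)=\pi_2$. For surjectivity, given $(x_1,\ldots,x_n)\in\Asc{n}$ the inductive hypothesis supplies $\sigma\in\cclass{n-1}$ with $\Psi'(\sigma)=(x_1,\ldots,x_{n-1})$; the ascent-sequence condition together with (b) gives $x_n\leq 1+\newl(\sigma)$, so $\varphi'(\sigma,x_n)\in\cclass{n}$ is well-defined, and $\psi'\circ\varphi'=\mathrm{id}$ yields $\Psi'(\varphi'(\sigma,x_n))=(x_1,\ldots,x_n)$.

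The main obstacle is the bookkeeping for invariant (b), together with reconciling that Lemma~\ref{arc:remove} phrases its bound as $i\leq 1+\newl(\pi)$ while the ascent-sequence definition requires $i\leq 1+\newl(\sigma)$; this reconciliation has to be extracted separately inside each of the three removal rules ($\subi$, $\subii$, $\subiii$) before it can feed the induction. Once that statistic-tracking step is in place, everything else is a mechanical induction driven by the compatibility lemma.
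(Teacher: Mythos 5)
Your proposal is correct and follows exactly the route the paper intends: induction on $n$ combining Lemma~\ref{arc:remove}, the addition lemma, and the compatibility lemma, with the statistic-matching invariant $\asc(\Psi'(\tau))=\newl(\tau)$ (together with the observation that the last entry of $\Psi'(\sigma)$ equals $\newsl(\sigma)$) doing the work of converting the bound $0\leq i\leq 1+\newl$ into the defining condition of an ascent sequence. The paper omits these details entirely ("easily proved by induction"), so your write-up simply makes explicit the bookkeeping the authors leave implicit.
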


\end{document}